\numberwithin{equation}{section}
\theoremstyle{definition}
\newtheorem{definition}{Definition}[section]
\theoremstyle{remark}
\newtheorem{remark}[definition]{Remark}
\theoremstyle{plain}
\newtheorem{theorem}[definition]{Theorem}
\newtheorem{result}[definition]{Result}
\newtheorem{proposition}[definition]{Proposition}
\newcommand{\zt}{\zeta}
\newcommand{\zbar}{\overline{z}}
\newcommand{\zahl}{\mathbb{Z}}  
\newcommand{\nat}{\mathbb{N}}
\newcommand{\rem}{\mathcal{R}}
\newcommand{\Lam}{\Lambda}
\newcommand\partl[2]{\frac{\partial{#1}}{\partial{#2}}}
\newcommand{\Dsc}{\overline{D}}
\newcommand{\smoo}{\mathcal{C}}
\newcommand{\hol}{\mathcal{O}}
\newcommand{\er}{{\sf Re}}
\newcommand{\mi}{{\sf Im}}
\newcommand{\bcdot}{\boldsymbol{\cdot}}
\newcommand\cis[1]{e^{i{#1}}}
\newcommand\ncis[1]{e^{-i{#1}}}
\newcommand{\lrarw}{\longrightarrow}
\newcommand{\doot}{\boldsymbol{.}}
\newcommand{\gee}{\mathcal{G}}
\newcommand{\hft}{\mathcal{H}}
\newcommand{\CC}{\mathbb{C}^2}
\newcommand{\cplx}{\mathbb{C}} 
\newcommand{\RR}{\mathbb{R}^2}
\newcommand{\rea}{\mathbb{R}}
\newcommand{\srf}{\mathcal{S}}   
\newcommand\gra[1]{\varGamma_{{#1}}}
\begin{document}

\title[Local Polynomial convexity]{Polynomial approximation, local polynomial \\
convexity, and degenerate CR singularities -- II}
\author{Gautam Bharali}

\thanks{This work is supported by the DST via the Fast Track grant SR/FTP/MS-12/2007 and
by the UGC under DSA-SAP, Phase~IV}

\address{Department of Mathematics, Indian Institute of Science, Bangalore 560012, India}
\email{bharali@math.iisc.ernet.in}

\keywords{Complex tangency, CR~singularity, plurisubharmonic functions, polynomially convex, uniform
approximation}
\subjclass[2000]{Primary 30E10, 32E20, 32F05}

\begin{abstract} 
We provide some conditions for the graph of a H{\"o}lder-continuous function on $\Dsc$, 
where $\Dsc$ is a closed disc in $\cplx$, to be polynomially convex. Almost all
sufficient conditions known to date --- provided the function (say $F$) is smooth --- arise from
versions of the Weierstrass Approximation Theorem on $\Dsc$. These conditions often fail to yield
any conclusion if ${\rm rank}_\rea DF$ is not maximal on a sufficiently large subset of $\Dsc$. 
We bypass this difficulty by introducing a technique that relies on the interplay of certain
plurisubharmonic functions. This technique also allows us to make some observations on the 
polynomial hull of a graph in $\CC$ at an isolated complex tangency.
\end{abstract}
\maketitle

\section{Introduction and statement of results}\label{S:intro}

This paper has evolved from the following two considerations:
\begin{itemize}
\item Let $\Dsc$ be a closed disc in $\cplx$ and let $F\in\smoo(\Dsc)$. There are
numerous results that provide sufficient conditions for the uniform algebra on $\Dsc$
generated by $z$ and $F$ to equal $\smoo(\Dsc)$; 
see \cite{wermer:ad64, preskenis:apzaf78,
o'farrellPreskenis:ap2d84, duval:edpc88, o'farrellPreskenis:uap2f89,
bharali:palpcdCRs06}. These conditions are sufficient, naturally, for the graph of 
$F$ to be polynomially convex. The aforementioned results require --- either
by explicit fiat or through some {\em a priori} condition on $F$ --- that $F^{-1}\{F(\zt)\}$ 
be at most countable for a.e. $\zt\in\Dsc$. This is troublesome because it excludes, for 
instance, $\cplx(\cong\RR)$-valued functions in $\smoo^1(\Dsc)$ having ${\rm rank}_{\rea}DF<2$ 
on a non-empty open subset of $\Dsc$. It would thus be useful to devise techniques that allow 
us to detect polynomial convexity  without imposing such restrictions.

\item In a recent work, Dieu~\&~Chi \cite{dieuChi:lpccgCC09} employed a technique 
that can be applied to situations very different from the one that they study. Their idea,
suitably adapted to the given context, might serve as quite a general tool in the study 
of polynomial convexity of graphs. We wish to argue this case by presenting a
couple of adaptations of their idea.
\end{itemize} 

\noindent{Given $F$ as above and a set $S\subseteq{\sf Dom}(F)$, we shall write
$\gra{S}(F):={\sf Graph}(F)\cap(S\times\cplx)$. Let us understand some of the known sufficient 
conditions on $F$, for $\gra{\Dsc}(F)$ to be polynomially convex, by examining critically a 
representative result selected from the aforementioned papers. Hence consider:}

\begin{result}[\cite{bharali:palpcdCRs06}, Theorem~1.1]\label{R:bharali}
Let $F$ be a complex-valued continuous function on a closed disc
$\Dsc\Subset\cplx$. Suppose that there is a set $E\subset\Dsc$ having zero Lebesgue measure such
that 
\begin{itemize}
\item $F^{-1}\{F(\zt)\}$ is at most countable $\forall\zt\in\Dsc\setminus E$;
\item For each $\zt\in\Dsc\setminus E$, there exists an open sector $S_\zt\varsubsetneq\cplx$
with vertex at $0$ such that
\[
(z-\zt)(F(z)-F(\zt)) \ \in \ S_\zt \quad\forall z\in\Dsc\setminus F^{-1}\{F(\zt)\}.
\]
\end{itemize}
Then, $[z,F]_{\Dsc}=\smoo(\Dsc)$. In particular, $\gra{\Dsc}(F)$ is polynomially convex.  
\end{result} 
\smallskip

\noindent{To clarify before we proceed further: $[z,F]_{\Dsc}$ denotes the uniform algebra
on $\Dsc$ generated by $z$ and $F$.}
\smallskip

Result~\ref{R:bharali} is fairly representative of the key results in many of the papers 
cited above. It is proved using ideas very similar to those in \cite{wermer:ad64, preskenis:apzaf78, 
o'farrellPreskenis:ap2d84}. Observe:
\begin{itemize}
\item[I)] the requirement that $F^{-1}\{F(\zt)\}$ be countable for a.e. $\zt\in\Dsc$, and recall
the difficulties that it represents. Some version of this condition appears in 
\cite{wermer:ad64, preskenis:apzaf78, o'farrellPreskenis:ap2d84}, and is essential to ensuring that
the Cauchy transform of any measure $\mu\in\smoo(\Dsc)^\star$ that annihilates $[z,F]_{\Dsc}$ 
vanishes a.e. The approximation result then follows from an argument in \cite[Theorem~4]{bishop:mbfa59}.
This strategy was first used in \cite{wermer:ad64}.

\item[II)] that the truth value of the hypothesis of Result~\ref{R:bharali} is, in general, 
{\em not} preserved when $F$ is replaced by $\widetilde{{}^\psi F}$, 
where $\widetilde{{}^\psi F}$ is that function of which 
$\psi(\gra{\Dsc}(F))$ is the graph, for a given $\psi\in{\rm Aut}(\CC)$ that preserves 
$\Dsc\times\cplx$. In contrast, polynomial convexity (or the lack thereof) {\em is} preserved
under such a $\psi$. In fact, functoriality is not an {\em a priori} consideration in the proofs
of any of the results cited above.
\end{itemize}

It would be desirable to formulate a theorem that addresses the two problems presented above. 
A good starting point is to work with an $F$ that has greater regularity than in 
Result~\ref{R:bharali}. With this assumption, ideas that are quite different from the one 
(sketchily) outlined in (I) become usable. Before we can state our first result, we need:

\begin{definition}
A subset $S\subset\cplx$ is said to be {\em finitely connected} if 
$\widehat{\cplx}\setminus S$ has finitely many connected components ($\widehat{\cplx}$ denotes
the one-point compactification of $\cplx$).
\end{definition}
\smallskip

Some notation: given a compact set $K\Subset\cplx$, we define the class $\hol(K)$ as
\begin{align}
\hol(K) \ := \ &\text{the set of functions that are holomorphic on (not necessarily fixed)} 
\notag\\
&\text{open neighbourhoods of $K$.} \notag
\end{align}
The sub-class $\hol_\zt(\Dsc)$ in the following result is the set of functions in
$\hol(\Dsc)$ that vanish at $\zt$ (where $\zt\in\Dsc$).

\begin{theorem}\label{T:HolderApprx}
Let $F$ be a complex-valued function of H{\"o}lder class $\smoo^\alpha(\Dsc)$,
$0<\alpha<1$, where $\Dsc\Subset\cplx$ is a closed disc with centre at $0$. Assume that 
we can find a nowhere-dense subset $E\subset\Dsc$, a nowhere-vanishing function $A\in\hol(\Dsc)$, 
positive constants $M,K>0$, and a number $\nu\geq 1$ such that
for each $\zt\in\Dsc\setminus E$, there exist:
\begin{itemize}
\item a function $G_\zt\in\hol_\zt(\Dsc)$ satisfying $|z-\zt|^{-1}|G_\zt(z)|\leq M \ \forall
z\in\Dsc$, and
\item a constant $C_\zt\in S^1$,
\end{itemize}
so that
\begin{equation}\label{E:wedge}
C_\zt(z-\zt)(A\doot F(z)-A\doot F(\zt)+G_\zt(z))\in
\left\{u+iv\in\cplx:u\geq 0, \ |v|\leq Ku^{1/\nu}\right\} \;\; \forall z\in\Dsc.
\end{equation}
Then, $\gra{\Dsc}(F)$ is polynomially convex. If, additionally, $F\in\smoo^{1}(\Dsc)$
and the set $\{z\in\Dsc:\partial_{\zbar} f(z)=0\}$ is nowhere dense and finitely
connected, then $[z,F]_{\Dsc}=\smoo(\Dsc)$.
\end{theorem}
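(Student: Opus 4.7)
The plan is to establish polynomial convexity of $\gra{\Dsc}(F)$ by exploiting the wedge datum to build a family of holomorphic functions that send the graph—and, by an Oka--Weil / maximum-principle argument, its polynomial hull—into the cusp $W$; the approximation conclusion will then follow from polynomial convexity under the additional smoothness hypotheses via the classical totally-real-graph machinery. For each $\zt\in\Dsc\setminus E$, I would introduce
\[
H_\zt(z,w)\;:=\;C_\zt(z-\zt)\bigl(A(z)w-A(\zt)F(\zt)+G_\zt(z)\bigr),
\]
which is holomorphic on $\Dsc\times\cplx$ and, by the hypothesis, satisfies $H_\zt(\gra{\Dsc}(F))\subseteq W:=\{u+iv:u\geq 0,\ |v|\leq Ku^{1/\nu}\}$. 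In particular $|\exp(-\lambda H_\zt)|\leq 1$ on $\gra{\Dsc}(F)$ for every $\lambda\geq 0$.

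Since $\widehat{\gra{\Dsc}(F)}$ is contained in the polynomially convex polyhedron $\Dsc\times\overline{D(0,R)}$ (for $R>\|F\|_\infty$), on a neighborhood of which $\exp(-\lambda H_\zt)$ is holomorphic, Oka--Weil approximation together with the defining property of the polynomial hull yields $|\exp(-\lambda H_\zt)|\leq 1$ throughout $\widehat{\gra{\Dsc}(F)}$. Letting $\lambda$ range over positive reals gives $\er H_\zt\geq 0$ on the hull, and a parallel Oka--Weil argument using polynomial convexity of the truncated cusps $W\cap\overline{D(0,M)}$ upgrades this to $H_\zt(\widehat{\gra{\Dsc}(F)})\subseteq W$ for every $\zt\in\Dsc\setminus E$.

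Now suppose, for contradiction, that $(z_0,w_0)\in\widehat{\gra{\Dsc}(F)}\setminus\gra{\Dsc}(F)$; projection forces $z_0\in\Dsc$, and $\Xi_0:=A(z_0)(w_0-F(z_0))\neq 0$. For $\zt\in\Dsc\setminus E$ approaching $z_0$, H\"older continuity of $F$, continuity of $A$, and $|G_\zt(z_0)|\leq M|z_0-\zt|$ combine to give
\[
H_\zt(z_0,w_0)\;=\;C_\zt(z_0-\zt)\Xi_0\;+\;O(|z_0-\zt|^{1+\alpha}),
\]
whose enforced membership in $W$ imposes, to leading order, a closed-half-plane constraint on the unit vector $e^{i\arg(z_0-\zt)}C_\zt\Xi_0$. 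Since $E$ is nowhere dense, $\zt$ may be chosen inside every narrow sector about $z_0$, and the main obstacle is to show that no assignment of the unit complex numbers $\{C_\zt\}_{\zt\in\Dsc\setminus E}$ can satisfy this half-plane constraint in every direction simultaneously. The resolution will use the wedge condition at $\zt$ itself, applied to graph points $z\to\zt$: the cusp-membership of $H_\zt(z,F(z))$ combined with the H\"older leading behaviour of $A(z)F(z)-A(\zt)F(\zt)$ and the linear Taylor term of $G_\zt$ at $\zt$ rigidifies the admissible argument of $C_\zt$ enough that it cannot keep pace with the rotation $e^{i\arg(z_0-\zt)}$ as $\zt$ circles $z_0$. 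Extracting this rigidity and combining it with the multi-directional half-plane constraints should force $\Xi_0=0$, the desired contradiction. This is precisely the interplay of plurisubharmonic functions promised in the introduction, realised through the pluriharmonic family $\{\er H_\zt\}_{\zt\in\Dsc\setminus E}$.

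For the approximation conclusion, the polynomial convexity of $\gra{\Dsc}(F)$ together with $F\in\smoo^1(\Dsc)$ and the nowhere-dense, finitely connected zero set of $\partial_{\zbar}F$ brings us within the scope of totally-real-graph approximation results in the spirit of \cite{o'farrellPreskenis:uap2f89,bharali:palpcdCRs06}. Off the critical set, $\gra{\Dsc}(F)$ is a totally real $\smoo^1$-surface, and finite connectedness of the critical set supplies a Cauchy-transform representation of $\smoo$-functions on $\Dsc$ with controlled behaviour where $\partial_{\zbar}F\neq 0$. Combining this representation with Oka--Weil polynomial approximation over the polynomially convex hull then yields $[z,F]_{\Dsc}=\smoo(\Dsc)$.
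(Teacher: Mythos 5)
Your opening moves are fine: the functions $H_\zt(z,w)=C_\zt(z-\zt)(A(z)w-A\doot F(\zt)+G_\zt(z))$ are holomorphic near $\Dsc\times\cplx$, and since $W\cap\overline{D(0,\rho)}$ has connected complement, Oka--Weil does give $H_\zt\bigl(\widehat{\gra{\Dsc}(F)}\bigr)\subseteq W$ for every $\zt\in\Dsc\setminus E$. The genuine gap is in what you try to do with this. The information you have transported to the hull is only what can be expressed through the holomorphic functions $H_\zt$ (equivalently, pluriharmonic bounds such as $\er H_\zt\geq 0$), and this is strictly weaker than what is needed: on the fibre $\{\zt\}\times\cplx$ the factor $(z-\zt)$ kills $H_\zt$, so $W$-membership says nothing there, and for $\zt$ near a putative hull point $(z_0,w_0)$ your ``half-plane in every direction'' constraint involves the constants $C_\zt$, which the hypothesis does not pin down. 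The rigidity you hope to extract from the wedge condition at $\zt$ simply does not exist in general: if $A\doot F(z)-A\doot F(\zt)+G_\zt(z)$ has little or no variation (e.g.\ $F$ constant, $G_\zt\equiv 0$), then \emph{every} $C_\zt\in S^1$ satisfies \eqref{E:wedge}, and an adversarial choice of $C_\zt$ can rotate so as to keep $C_\zt(z_0-\zt)A(z_0)(w_0-F(z_0))$ on the positive real axis for all $\zt$, defeating your contradiction. You acknowledge this step as unresolved (``should force $\Xi_0=0$''), and it is precisely the point where the argument cannot be completed along the proposed lines.

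What is missing is the quantitative, genuinely plurisubharmonic (not pluriharmonic) input that the paper extracts. Condition \eqref{E:wedge}, combined with the H{\"o}lder bound on $A\doot F$ and the estimate $|G_\zt(z)|\leq M|z-\zt|$, yields a pinching inequality $|A\doot F(z)-A\doot F(\zt)+G_\zt(z)|^p\leq B\,\er\bigl[C_\zt(z-\zt)(A\doot F(z)-A\doot F(\zt)+G_\zt(z))\bigr]$ on $\Dsc$, with $p=\nu(1+1/\alpha)$ and $B$ independent of $\zt$. Proposition~\ref{P:tool} --- whose proof rests on Poletsky's extension theorem and H{\"o}rmander's characterisation of polynomially convex hulls, and which your outline never uses --- transfers the inequality $\psi\leq 0$ for the psh function $\psi(z,w)=|A(z)w-A\doot F(\zt)+G_\zt(z)|^p-\er[\cdots]$ from the graph to the hull, giving $|A(z)w-A\doot F(\zt)+G_\zt(z)|\leq(B|z-\zt|)^{1/(p-1)}$ on $\widehat{\gra{\Dsc}(F)}$; setting $z=\zt$ collapses each fibre over $\Dsc\setminus E$, and a limiting argument (using that $E$ is nowhere dense and $F\in\smoo^\alpha$) handles $\zt\in E$. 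No composition/maximum-modulus argument with $H_\zt$ alone can recover this, because the conclusion controls $|A(z)w-A\doot F(\zt)+G_\zt(z)|$ against $|z-\zt|$ separately, not merely their product. Your sketch of the second assertion is likewise only gestural; the paper's route is short and concrete: Mergelyan's theorem gives $\mathscr{R}(L_F)=\smoo(L_F)$ for the nowhere dense, finitely connected set $L_F=\{\partial_{\zbar}F=0\}$, and Wermer's theorem in \cite{wermer:pcd65} (for $F\in\smoo^1$ with polynomially convex graph) then yields $[z,F]_{\Dsc}=\smoo(\Dsc)$.
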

\smallskip

\noindent{Lest the profusion of functions make 
Theorem~\ref{T:HolderApprx} seem very technical, we present the following special case of
Theorem~\ref{T:HolderApprx} that has a more concise statement. Concerning the more general
statement: the reader is directed to the remark
that follows our next result.}

\begin{proposition}\label{P:simple}
Let $F$ be a complex-valued function of H{\"o}lder class $\smoo^\alpha(\Dsc)$, 
$0<\alpha<1$, where $\Dsc\Subset\cplx$ is a closed disc with centre at $0$. Assume that
we can find a nowhere-dense subset $E\subset\Dsc$ and constants $K>0$, $\nu\geq 1$ such that
for each $\zt\in\Dsc\setminus E$, $\exists C_\zt\in S^1$ so that
\begin{equation}\label{E:wedgeSimple}
C_\zt(z-\zt)(F(z)-F(\zt))\in
\left\{u+iv\in\cplx:u\geq 0, \ |v|\leq Ku^{1/\nu}\right\} \;\; \forall z\in\Dsc.
\end{equation}
Then, $\gra{\Dsc}(F)$ is polynomially convex. If, additionally, $F\in\smoo^{1}(\Dsc)$ and 
the set $\{z\in\Dsc:\partial_{\zbar} f(z)=0\}$ is nowhere dense and finitely
connected, then $[z,F]_{\Dsc}=\smoo(\Dsc)$.
\end{proposition}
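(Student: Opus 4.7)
The plan is to derive Proposition~\ref{P:simple} as a direct specialization of Theorem~\ref{T:HolderApprx}; the hypotheses of the proposition correspond to making the trivial choice of the auxiliary data $(A, G_\zt, M)$ in the theorem, and the two conclusions are verbatim the same.

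First I would take $A \equiv 1$ on $\Dsc$. A nonzero constant is clearly an element of $\hol(\Dsc)$ and is nowhere-vanishing, so it is admissible as the factor $A$ in Theorem~\ref{T:HolderApprx}. Next, for each $\zt\in\Dsc\setminus E$ I would take $G_\zt\equiv 0$. This function is holomorphic on $\Dsc$ and vanishes at $\zt$ (indeed, everywhere), so $G_\zt\in\hol_\zt(\Dsc)$; and the bound $|z-\zt|^{-1}|G_\zt(z)|\leq M$ holds trivially with, say, $M=1$. The constants $K$, $\nu$ and the unimodular numbers $C_\zt$ supplied by the hypothesis of Proposition~\ref{P:simple} are transferred to the hypothesis of Theorem~\ref{T:HolderApprx} unchanged.

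With these choices, the expression $C_\zt(z-\zt)\bigl(A\doot F(z)-A\doot F(\zt)+G_\zt(z)\bigr)$ in the wedge condition \eqref{E:wedge} collapses to $C_\zt(z-\zt)(F(z)-F(\zt))$. Thus \eqref{E:wedge} reduces to \eqref{E:wedgeSimple}, and every remaining hypothesis of Theorem~\ref{T:HolderApprx} --- the Hölder class assumption on $F$, the nowhere-density of $E$, and (for the second assertion) the $\smoo^1$-regularity together with the requirement that $\{z\in\Dsc:\partial_{\zbar} f(z)=0\}$ be nowhere dense and finitely connected --- appears verbatim in Proposition~\ref{P:simple}. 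Theorem~\ref{T:HolderApprx} then yields polynomial convexity of $\gra{\Dsc}(F)$, and, under the additional hypotheses, $[z,F]_{\Dsc}=\smoo(\Dsc)$.

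The only point one must check is that the degenerate choice $G_\zt\equiv 0$ respects the formalism of Theorem~\ref{T:HolderApprx} (membership in $\hol_\zt(\Dsc)$, the uniform bound on $|z-\zt|^{-1}|G_\zt(z)|$), and this is immediate. Consequently there is no substantive analytic obstacle in this proof; all of the technical work is absorbed into the proof of Theorem~\ref{T:HolderApprx}, and the present deduction is a routine unpacking of notation.
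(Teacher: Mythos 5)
Your deduction is correct and is exactly how the paper itself treats Proposition~\ref{P:simple}: it is stated there as a special case of Theorem~\ref{T:HolderApprx}, obtained by taking $A\equiv 1$ and $G_\zt\equiv 0$ (with any $M>0$), so condition \eqref{E:wedge} collapses to \eqref{E:wedgeSimple}. Nothing further is needed.
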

\smallskip

\begin{remark}\label{Rem:fnctrl}
The function $A\in\hol(\Dsc)$ and the
functions $G_\zt\in\hol_\zt(\Dsc), \ \zt\in\Dsc\setminus E$, make condition
\eqref{E:wedge} more permissive than \eqref{E:wedgeSimple}. In fact, the hypothesis of
Theorem~\ref{T:HolderApprx} is permissive enough to
allow us to recover the well-known fact that if $F\in\hol(\Dsc)$, 
then $\gra{\Dsc}(F)$ is polynomially convex. This will {\em not} follow from the more restrictive 
\eqref{E:wedgeSimple}, or from 
any of the sufficient conditions provided by the results cited above. (Of course, the aim of
those earlier results was to establish that $[z,F]_{\Dsc}=\smoo(\Dsc)$, 
with polynomial convexity being a by-product.) While these are the intuitions that led 
to condition \eqref{E:wedge}, we also find that this condition is functorial --- in 
the sense of the discussion in (II) above --- with respect to any 
$\psi\in{\rm Aut}(\CC)$ that preserves $\Dsc\times\CC$. This is
demonstrated in the final section of this paper. 
\end{remark}

\begin{remark}\label{Rem:O'Farrell}
The interested reader is referred to \cite{o'farrellPreskenis:ap2cv79-80}, in which
a theorem on the polynomial convexity of $\gra{\Dsc}(F)$, for $F\in {\rm Lip}(1,\Dsc)$,
is proved. The ideas central to \cite{o'farrellPreskenis:ap2cv79-80} are different from
those associated with the results cited above. Of interest, however, is a nice survey
in \cite[Section~3]{o'farrellPreskenis:ap2cv79-80} of some
known sufficient conditions for polynomial convexity.
\end{remark}

The primary tool for proving Theorem~\ref{T:HolderApprx} is the following proposition.

\begin{proposition}\label{P:tool}
Let $F\in\smoo(\Dsc)$, where $\Dsc\Subset\cplx$ is a closed disc with centre at $0$,
and let $\zt\in\Dsc$. Suppose there exist a constant $p\geq 2$, a nowhere vanishing function 
$A\in\hol(\Dsc)$, and functions $G,H\in\hol_\zt(\Dsc)$ such that
\begin{equation}\label{E:pinch}
|A\doot F(z)-A\doot F(\zt)+G(z)|^p \ \leq \ \er\left[H(z)(A\doot F(z)-A\doot F(\zt)+G(z))\right] \;\; 
\forall 
z\in\Dsc.
\end{equation}
Then
\[
\widehat{\gra{\Dsc}(F)} \ \subset \ 
\{(z,w)\in\Dsc\times\cplx:|A(z)w-A\doot F(\zt)+G(z)|\leq |H(z)|^{1/(p-1)}\}.
\]
\end{proposition}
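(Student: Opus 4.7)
The plan is to extract the conclusion from a single, judiciously chosen plurisubharmonic function on $\cplx^2$, following the Dieu--Chi strategy flagged in the introduction. Let $U$ be an open neighbourhood of $\Dsc$ on which $A, G, H$ are holomorphic, and set
\[
\Psi(z,w) \ := \ A(z)w-A\doot F(\zt)+G(z),
\]
which is holomorphic on $U\times\cplx$ (being affine in $w$). Define
\[
\Phi(z,w) \ := \ |\Psi(z,w)|^p \ - \ \er[H(z)\Psi(z,w)].
\]
Because $p\geq 2$ and $\Psi$ is holomorphic, $|\Psi|^p$ is psh on $U\times\cplx$; and $\er[H\Psi]$ is pluriharmonic there, being the real part of the holomorphic function $H(z)\Psi(z,w)$. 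Thus $\Phi$ is psh on $U\times\cplx$, and on the graph $\gra{\Dsc}(F)$ the hypothesis \eqref{E:pinch} reads precisely $\Phi(z,F(z))\leq 0$ for every $z\in\Dsc$.

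To transfer this bound to $L:=\widehat{\gra{\Dsc}(F)}$, I would exploit polynomial approximation. Since $\Dsc$ is polynomially convex, the Oka--Weil theorem produces sequences of polynomials $A_n, G_n, H_n$ converging uniformly to $A, G, H$ on $\Dsc$. The $\Phi_n$ built from $A_n, G_n, H_n$ by the same recipe are psh on all of $\cplx^2$. Because $\Dsc$ is polynomially convex, the projection of $L$ onto the $z$-axis lies in $\widehat{\Dsc}=\Dsc$, and because $|w|$ is psh on $\cplx^2$, the $w$-projection of $L$ is bounded by $\|F\|_\Dsc$. Hence $L\Subset\Dsc\times\cplx\subset U\times\cplx$ and $\Phi_n\to\Phi$ uniformly on $L$. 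Applying the standard maximum principle for psh functions on $\cplx^2$ to each $\Phi_n$ yields $\sup_L\Phi_n\leq\sup_{\gra{\Dsc}(F)}\Phi_n$, and letting $n\to\infty$ gives $\Phi\leq 0$ on $L$.

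For any $(z_0,w_0)\in L$ the bound $\Phi(z_0,w_0)\leq 0$ then reads
\[
|\Psi(z_0,w_0)|^p \ \leq \ \er[H(z_0)\Psi(z_0,w_0)] \ \leq \ |H(z_0)|\,|\Psi(z_0,w_0)|,
\]
and dividing by $|\Psi(z_0,w_0)|^{p-1}$ (the remaining case $\Psi(z_0,w_0)=0$ being vacuous, as $|H(z_0)|\geq 0$) yields $|\Psi(z_0,w_0)|\leq|H(z_0)|^{1/(p-1)}$, which is exactly the asserted inclusion. The main obstacle is the transfer step: $\Phi$ itself is only psh on the neighbourhood $U\times\cplx$ of $L$, not on all of $\cplx^2$, so a naive invocation of the psh maximum principle on the polynomial hull is not quite licit. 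The Oka--Weil approximation sidesteps this cleanly; its only extra input is control on the location of $L$, which is supplied by polynomial convexity of $\Dsc$ together with the psh function $|w|$.
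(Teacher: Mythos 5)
Your argument is correct, but your globalization step is genuinely different from the paper's. The paper keeps the single function $\psi(z,w)=|A(z)w-A\doot F(\zt)+G(z)|^p-\er\left[H(z)(A(z)w-A\doot F(\zt)+G(z))\right]$, which is psh only on $\Delta\times\cplx$ for a slightly larger disc $\Delta$, and transfers the bound to the hull by combining Catlin's (Sibony's) construction of a smooth $u\in{\sf psh}(\CC)$ vanishing exactly on $\widehat{\gra{\Dsc}(F)}$ with Poletsky's extension theorem (Result~\ref{R:poletsky}), and only then invoking H{\"o}rmander's characterization of the polynomial hull as the ${\sf psh}(\CC)$-hull. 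You instead exploit the special structure of $\psi$ (affine in $w$, with coefficients holomorphic near $\Dsc$): approximating $A,G,H$ uniformly on $\Dsc$ by polynomials --- Oka--Weil, or simply Runge/Taylor since $\Dsc$ is a closed disc --- produces functions $\Phi_n\in{\sf psh}(\CC)$ to which the psh-hull characterization applies directly, and uniform convergence on the two compacts $\gra{\Dsc}(F)$ and $\widehat{\gra{\Dsc}(F)}$ (both inside $\Dsc\times\cplx$, as you verify) lets you pass to the limit. This bypasses Poletsky and Catlin entirely and is more elementary; what it gives up is generality --- the paper's route works verbatim for an arbitrary psh function defined and bounded below merely near the hull, which is precisely the flexibility of the Dieu--Chi device the paper wants to isolate. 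Two small points: the inequality $\sup_{\widehat{\gra{\Dsc}(F)}}\Phi_n\leq\sup_{\gra{\Dsc}(F)}\Phi_n$ is not literally a maximum principle but the nontrivial inclusion of the polynomial hull in the ${\sf psh}(\CC)$-hull, i.e. \cite[Theorem~4.3.4]{hormander:icasv90}, the same result the paper cites, and you should name it as such; and in the final step one divides $|\Psi|^p\leq|H||\Psi|$ by $|\Psi|$ (not by $|\Psi|^{p-1}$), the case $\Psi(z_0,w_0)=0$ being trivial rather than vacuous --- both harmless slips.
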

\smallskip

\noindent{A note on our notation: given a compact $K\Subset\cplx^n$, $\widehat{K}$ denotes the 
polynomially convex hull of $K$. The idea behind Proposition~\ref{P:tool} is taken from
a step in the proof of Theorem~2.1 in Dieu~\&~Chi's paper \cite{dieuChi:lpccgCC09}. However,
their idea is not quite in a form that we can directly use. Hence, we provide a complete
proof of Proposition~\ref{P:tool} in the next section.}
\smallskip

It also turns out that, {\em when
limited to deducing polynomial convexity}, Theorem~\ref{T:HolderApprx} subsumes Wermer's theorem
in \cite{wermer:ad64}. This is discussed in Section~\ref{S:discuss}. In order motivate our next 
result, it will be helpful
to state the result of Wermer that we have referred to several times already.

\begin{result}[Wermer, \cite{wermer:ad64}]\label{R:Wermer}
Let $\Dsc\Subset\cplx$ be a closed disc. If $F(z)=\zbar+R(z)$ on $\Dsc$ and $R$ satisfies
\[
|R(z)-R(\zt)| \ < \ |z-\zt|
\]
for all $\zt, z$ in $\Dsc$ with $\zt\neq z$, then $[z,F]_{\Dsc}=\smoo(\Dsc)$. In particular,
$\gra{\Dsc}(F)$ is polynomially convex. 
\end{result}
\smallskip

\noindent{One could ask whether we can still infer polynomial convexity if we replace
$\zbar$ by $\zbar^m$, where $m\in\zahl_+$, in Result~\ref{R:Wermer} (provided $0\in\Dsc$). The 
answer to this question (with one essential amendment)  --- as Theorem~\ref{T:WermerII} will 
show --- is, ``Yes.'' Note that if $0\in\Dsc$, $m\geq 2$, and $G$ is differentiable
at $0$, then the origin is a point of complex tangency of $\gra{\Dsc}(F)$; this is what
makes our question a non-trivial one. Given a smooth real surface $\srf\subset\CC$ and a point 
$p\in\srf$ at which $T_p(\srf)$ is a complex line, deciding whether or not $\srf$ is locally
polynomially convex at $p$ is a subtle problem. We will abbreviate the phrase 
``point of complex tangency'' to {\em CR~singularity}. When $\srf$ has an isolated 
CR~singularity at $p\in\srf$ and the order of  contact of $T_p(\srf)$ with $\srf$ at $p$ equals
$2$, 
we now have a nearly
complete understanding of the local polynomial hull of $\srf$ at $p$. This knowledge stems
from the works of Bishop \cite{bishop:dmcEs65}, Forstneri{\v{c}}-Stout \cite{forstnericStout:ncpcs91},
and J{\"o}ricke \cite{joricke:lphdnipp97}. Much less is known when the order of contact of 
$T_p(\srf)$ with $\srf$ at an isolated CR~singularity $p$ is {\em greater than} $2$. Note that
when $p$ is a CR~singularity, there is a complex-affine change of coordinate centered at $p$ 
with respect to which $(\srf,p)$ is locally a graph. It would be the graph of a function of the
form
\begin{equation}\label{E:gr}
F(z) \ = \ \sum_{j=0}^mC_jz^{m-j}\zbar^j + \rem(z),
\end{equation}
where $\rem(z)=O(|z|^{m+1})$, if the order of contact of 
$T_p(\srf)$ with $\srf$ at $p$ is $m$, $m\in\zahl_+$. In \cite[Theorem~1.1]{bharali:sdCRslpc05},
some sufficient conditions were given for $(\srf,p)$ to be locally polynomially convex at $p$,
provided $C_m\neq 0$ in the associated $F$ given by \eqref{E:gr}. These are, however, very
technical conditions, and it would be of interest to see whether alternative conditions could
be obtained with considerably less technical exertion. Such conditions can be derived from the
following}

\begin{theorem}\label{T:WermerII}
Let $\Dsc\Subset\cplx$ be a closed disc with centre at $0$.
If $F(z)=\zbar^m+R(z), \ z\in\Dsc$, where
$m\in\zahl_+$, and $R$ satisfies
\begin{equation}\label{E:WermerII}
|R(z)-R(\zt)| \ < \ |z^m-\zt^m| \;\; \text{for all $z,\zt\in\Dsc : z^m\neq\zt^m$}, 
\end{equation}
then $\gra{\Dsc}(F)$ is polynomially convex.

Additionally, we can conclude that $[z,F]_{\Dsc}=\smoo(\Dsc)$ in the following cases:
\begin{itemize}
\item whenever $m=1$ (with no conditions beyond \eqref{E:WermerII} on $R$);
\item if $m\geq 2$, $R\in\smoo^{1}(\Dsc)$, and $\exists\alpha\in (0,1)$ such that $R$ 
satisfies the stronger estimate:
\[
|R(z)-R(\zt)| \ < \ \alpha|z^m-\zt^m| \;\; \text{for all $z,\zt\in\Dsc : z^m\neq\zt^m$}.
\]
\end{itemize}
\end{theorem}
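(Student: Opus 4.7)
The plan is to split the argument into the polynomial convexity of $\gra{\Dsc}(F)$ and, in the two cases where it is asserted, the approximation conclusion $[z,F]_{\Dsc}=\smoo(\Dsc)$.

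For polynomial convexity I would apply Proposition~\ref{P:tool} at each $\zt\in\Dsc$ with the choices $p=2$, $A\equiv 1$, $G\equiv 0$, and
\[
H(z) \ := \ 2(z^m-\zt^m) \ \in \ \hol_\zt(\Dsc).
\]
Since $(z^m-\zt^m)(\zbar^m-\zetbar^m)=|z^m-\zt^m|^2$, a short expansion of $|F(z)-F(\zt)|^2$ with $F=\zbar^m+R$ reveals that the inequality \eqref{E:pinch} reduces to $|R(z)-R(\zt)|^2\leq|z^m-\zt^m|^2$ for every $z\in\Dsc$. When $z^m\neq\zt^m$ this is exactly \eqref{E:WermerII}. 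When $z\neq\zt$ but $z^m=\zt^m$, one picks $z_n\to z$ with $z_n^m\neq\zt^m$ (possible since $\{w\in\Dsc:w^m=\zt^m\}$ is finite) and uses continuity of $R$ to force $R(z)=R(\zt)$, so both sides vanish. Proposition~\ref{P:tool} therefore yields
\[
\widehat{\gra{\Dsc}(F)} \ \subset \ \bigl\{(z,w)\in\Dsc\times\cplx:|w-F(\zt)|\leq 2|z^m-\zt^m|\bigr\},
\]
and setting $z=\zt$ while letting $\zt$ range over $\Dsc$ gives $\widehat{\gra{\Dsc}(F)}=\gra{\Dsc}(F)$.

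For the approximation in the case $m=1$, the hypothesis coincides with that of Result~\ref{R:Wermer}, so no further work is required. For $m\geq 2$ under the strengthened bound $|R(z)-R(\zt)|<\alpha|z^m-\zt^m|$ and $R\in\smoo^1(\Dsc)$, I would differentiate the estimate at each $\zt\neq 0$ along a direction $h$ and optimise over $\arg h$ to obtain $|\partial_zR(\zt)|+|\partial_{\zbar}R(\zt)|\leq\alpha m|\zt|^{m-1}$, handling the case $\zt=0$ separately by dominating $|R(z)-R(0)|\leq\alpha|z|^m$ (which for $m\geq 2$ is $o(|z|)$) to conclude $\partial_{\zbar}R(0)=0$. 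Hence
\[
|\partial_{\zbar}F(\zt)| \ \geq \ m|\zt|^{m-1}-|\partial_{\zbar}R(\zt)| \ \geq \ m(1-\alpha)|\zt|^{m-1},
\]
vanishing only at $\zt=0$. Therefore $\{\zt\in\Dsc:\partial_{\zbar}F(\zt)=0\}\subset\{0\}$, a nowhere-dense set whose complement in $\widehat{\cplx}$ is connected; together with the polynomial convexity just obtained and $F\in\smoo^1(\Dsc)$, this is precisely the input required by the second conclusion of Theorem~\ref{T:HolderApprx}.

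The main technical hurdle is in this final invocation. Formally, Theorem~\ref{T:HolderApprx} carries the wedge condition \eqref{E:wedge} as a blanket hypothesis, and for $F=\zbar^m+R$ with $m\geq 2$ that condition is awkward to verify directly: the factor $Q_{m-1}(\zbar,\zetbar):=\sum_{k=0}^{m-1}\zbar^{m-1-k}\zetbar^k$ arising from $(z-\zt)(\zbar^m-\zetbar^m)=|z-\zt|^2Q_{m-1}(\zbar,\zetbar)$ has argument that varies with $z$, and no $\zt$-independent nowhere-vanishing $A\in\hol(\Dsc)$ appears to absorb this variation. I would rely on the structural observation that, inside the proof of Theorem~\ref{T:HolderApprx}, the wedge condition is used only to feed Proposition~\ref{P:tool} and thereby obtain polynomial convexity; the step upgrading polynomial convexity to $\smoo$-density employs only polynomial convexity, the $\smoo^1$-regularity of $F$, and the structural hypothesis on $\{\partial_{\zbar}F=0\}$. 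Our independent derivation of polynomial convexity via Proposition~\ref{P:tool} then supplies the missing ingredient.
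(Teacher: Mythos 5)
Your proposal is correct, and at the strategic level it coincides with the paper's proof: polynomial convexity comes from Proposition~\ref{P:tool} applied with $A\equiv 1$, $G\equiv 0$, and $H$ a multiple of $z^m-\zt^m$; the case $m=1$ is delegated to Result~\ref{R:Wermer}; and the $m\geq 2$ approximation statement follows by showing $\partial_{\zbar}F\neq 0$ away from the origin and then running the final paragraph of the proof of Theorem~\ref{T:HolderApprx} (Wermer's theorem from \cite{wermer:pcd65} plus Mergelyan), which uses only polynomial convexity, $F\in\smoo^1(\Dsc)$, and the structure of $L_F$ --- so your ``structural observation'' is exactly what the paper does: it never invokes Theorem~\ref{T:HolderApprx} as a black box either, so there is no gap there. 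The differences are tactical, and in one place your route is cleaner: you verify \eqref{E:pinch} with $p=2$ and $H(z)=2(z^m-\zt^m)$ by direct expansion, which reduces the pinch inequality exactly to $|R(z)-R(\zt)|\leq |z^m-\zt^m|$ (the degenerate set $z^m=\zt^m$ being handled by continuity, as you note), whereas the paper first places $(z^m-\zt^m)(F(z)-F(\zt))$ in a parabolic wedge and reruns the estimate chain from the proof of Theorem~\ref{T:HolderApprx}, ending with $p=4$ and a cruder constant; your choice even gives the sharper fibre bound $|w-F(\zt)|\leq 2|z^m-\zt^m|$, though both suffice. For $L_F$ you bound $|\partial_z R(\zt)|+|\partial_{\zbar}R(\zt)|$ by linearising the strengthened inequality and optimising over the direction of the increment, while the paper estimates the two difference quotients defining $\partial_{\zbar}R$ along real and imaginary increments; both give $L_F\subseteq\{0\}$, and your separate treatment of $\zt=0$ is the more careful one. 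The only point worth tightening is the boundary: for $\zt\in\bdy\Dsc$ the admissible increments fill only a half-disc of directions, but since the quantity you maximise depends on $e^{2i\arg h}$ the optimisation still yields the full bound, and in any case $L_F\subseteq\{0\}\cup\bdy\Dsc$ would already be nowhere dense and finitely connected, so the conclusion is unaffected (the paper's own difference-quotient argument carries the same, equally harmless, caveat).
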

\smallskip

Note the various similarities of the above theorem with Result~\ref{R:Wermer}. However, 
there does not seem to be any obvious way in which the technique pioneered by Wermer 
in \cite{wermer:ad64} --- i.e. the ideas outlined in (I) above --- can be made to work
when $m\geq 2$ in Theorem~\ref{T:WermerII}. It is Proposition~\ref{P:tool} that provides 
the key ingredient in its proof. This proof will be presented in Section~\ref{S:WermerII}.
We should also mention here De Paepe's generalisation \cite{dePaepe:ad86} of
Minsker's Theorem \cite{minsker:saS-Wtpra76}. While the concerns of 
Theorem~\ref{T:WermerII} are quite different --- e.g., the main result in \cite{dePaepe:ad86}
is a local result --- both results involve dealing with functions of a certain pattern
that vanish to higher order at $0\in\cplx$.  
\medskip

\section{The key proposition}\label{S:tool}

Proposition~\ref{P:tool} is the key proposition on which this paper depends. We devote this
section to its proof. That proof, in turn relies on two propositions --- one by H{\"o}rmander 
and the other by Catlin --- that have been known for a long time, and on a recent 
result of Poletsky. We begin by stating this result.

\begin{result}[Poletsky, \cite{poletsky:Jmam04}]\label{R:poletsky}
Let $K$ be a compact subset in an open set $V\subseteq {\cplx}^n$ and assume that there is a continuous
plurisubharmonic function $u$ on $V$ such that $u=0$ on $K$ and positive on $V \setminus K$. If $v$
is a plurisubharmonic function defined on a neighbourhood $W \subset V$ of $K$ and bounded below on $K$,
then there exists a plurisubharmonic function $v'$ on $V$ that coincides with $v$ on $K$.
\end{result}
\smallskip
  
With this, we are in a position to provide the:
\smallskip

\begin{proof}[Proof of Proposition~\ref{P:tool}] Let $\Delta$ be an open disc in $\cplx$
such that $\Dsc\Subset\Delta$ and $A, G, H\in\hol(\Delta)$. Define 
$\psi:\Delta\times\cplx\lrarw\rea$ by
\begin{multline}
\psi(z,w) \ := \ |A(z)w-A(\zt)F(\zt)+G(z)|^p-\er\left[H(z)(A(z)w-A(\zt)F(\zt)+G(z))\right] \\
\forall (z,w)\in\Delta\times\cplx, \notag
\end{multline}
where $\zt$ and $p$ are as given in the statement of Proposition~\ref{P:tool}. 
Clearly, $\psi\in{\sf psh}(\Delta\times\cplx)$. Let us set $\Gamma:=\gra{\Dsc}(F)$. Then,
$\widehat{\Gamma}$ is polynomially convex. Hence, it follows from a well-known construction
by Catlin \cite{catlin:bbhfwpd78} (see \cite[Proposition~1.3]{sibony:sawpd91} also)
that $\exists u\in{\sf psh}(\CC)\cap\smoo(\CC)$ such that $u=0$ on 
$\widehat{\Gamma}$ and $u>0$ on $\CC\setminus\widehat{\Gamma}$.
\smallskip 

Now note that, as $\widehat{\Gamma}\Subset\Dsc\times\cplx$, $\psi$ is defined on a neighbourhood
of $\widehat{\Gamma}$ and is bounded below on $\widehat{\Gamma}$. Thus, all the conditions in
the hypothesis of Result~\ref{R:poletsky} are satisfied (taking $K$ to be $\widehat{\Gamma}$ and 
$v$ to be $\psi$). Thus, $\exists\Psi\in{\sf psh}(\CC)\cap\smoo(\CC)$ such that
\begin{equation}\label{E:rstrctn}
\left.\Psi\right|_{\widehat{\Gamma}} \ = \ \left.\psi\right|_{\widehat{\Gamma}}.
\end{equation}
We now invoke a result of H\"{o}rmander~\cite[Theorem~4.3.4]{hormander:icasv90} that 
provides an alternative characterisation of polynomial convexity, owing to which:
\[
\widehat{\Gamma} \ = \ \left\{(z,w) \in \CC : U(z,w) \leq  \sup\nolimits_{\Gamma}U \;\;
\forall U\in {\sf psh}(\CC) \right\}. 
\]
From this fact and \eqref{E:rstrctn}, it follows that
\[
\Psi(z,w) \ \leq \ \sup\nolimits_{x\in\Gamma}\Psi(x) \ = \ \sup_{x\in\Gamma}\psi(x) \ \leq 0
\;\; \forall (z,w) \in\widehat{\Gamma}.
\]
The last inequality is a consequence of the fact that, by the inequality \eqref{E:pinch},
$\psi(z,F(z))\leq 0 \ \forall z\in\Dsc$. In other words, owing to \eqref{E:rstrctn}
\begin{align}
\ |A(z)w-A(\zt)F(\zt)+G(z)|^p \ &\leq \ \er\left[H(z)(A(z)w-A(\zt)F(\zt)+G(z))\right] \notag \\
\Longrightarrow \ |A(z)w-A(\zt)F(\zt)+G(z)|^p \ &\leq \ 
|H(z)||A(z)w-A(\zt)F(\zt)+G(z)| \;\; \forall (z,w)\in\widehat{\Gamma}. \notag
\end{align}
Therefore, we conclude
\[
\widehat{\Gamma} \ \subset \
\{(z,w)\in\Dsc\times\cplx:|A(z)w-A(\zt)F(\zt)+G(z)|\leq |H(z)|^{1/(p-1)}\}.
\]
\end{proof}
\medskip

\section{The proof of Theorem~\ref{T:HolderApprx}}\label{S:HolderApprx}

We will need the version of Mergelyan's theorem given below in order to prove the second part
of Theorem~\ref{T:HolderApprx}. This version follows easily from Mergelyan's 
Approximation Theorem \cite{mergelyan:uafcv52} --- see, for instance, Andersson's
observation \cite[Remark~6.4]{andersson:tca97}.
  
\begin{result}[Mergelyan]\label{R:mergelyan}
Suppose $K \subset \cplx$ is compact  and $\widehat{\cplx}\setminus K$ has a finite 
number (say $N$) of components. Choose one point, say $a_j$, from each component. Then,
any $f \in \smoo(K)\cap \hol(int(K))$ can be uniformly approximated
by rational functions with poles only at the points $a_j, \ j=1,\dots,N$.
\end{result}
\smallskip

\noindent{Here $\widehat{\cplx}$ denotes the one-point compactification of $\cplx$.}
\smallskip

\begin{proof}[Proof of Theorem~\ref{T:HolderApprx}] Consider a point $\zt\in\Dsc\setminus E$ and 
let $\Delta_\zt$ be an open disc in $\cplx$
such that $\Dsc\Subset\Delta_\zt$ and $A, G_\zt\in\hol(\Delta)$. Set
\begin{align}
H_\zt(z) \ &:= \ C_\zt(z-\zt), \notag \\
\Lam_\zt(z) \ &:= \ H_\zt(z)(A\doot F(z)-A\doot F(\zt)+G_\zt(z)), \; z\in\Delta_\zt. \notag
\end{align}
Owing to the estimate on $G_\zt$, $\exists\lambda>1$ such that
\[
\sup_{z\in\Dsc}|(z-\zt)(A\doot F(z)-A\doot F(\zt)+G_\zt(z))| \ < \ \lambda \;\; \forall\zt\in
\Dsc\setminus E.
\]
Then, writing $p:=\nu(1+(1/\alpha))$, we estimate
\begin{align}
&\frac{|A\doot F(z)-A\doot F(\zt)+G_\zt(z)|^p}{(K^\nu\lambda+1)\er\left[\Lam_\zt(z)/\lambda\right]} \notag 
\\
&\;\;\;
 \leq \ |A\doot F(z)-A\doot F(\zt)+G_\zt(z)|^p \notag \\
&\quad\qquad
 \times\left[\lambda^{-\nu}[\er(\Lam_\zt(z))]^\nu
	+|\mi(\Lam_\zt(z))|^\nu+\left(K^\nu\er(\Lam_\zt(z))-|\mi(\Lam_\zt(z))|^\nu\right)\right]^{-1}
\notag \\
&\;\;\;
 \leq \ \lambda^\nu \frac{|A\doot F(z)-A\doot F(\zt)+G_\zt(z)|^p}{[\er(\Lam_\zt(z))]^\nu
	+|\mi(\Lam_\zt(z))|^\nu} \notag \\
&\;\;\;
 \leq B\lambda^\nu \frac{|A\doot F(z)-A\doot F(\zt)+G_\zt(z)|^{\nu/\alpha}}{|z-\zt|^\nu} 
	\quad \forall z\in\Dsc\setminus\left(A\doot F+G_\zt-A\doot F(\zt)\right)^{-1}\{0\}. \notag
\end{align}
In the above estimate, the expression $[\er(\Lam_\zt(z))]^\nu$ makes sense because,
by \eqref{E:wedge}, $\er(\Lam_\zt(z))\geq 0 \ \forall z\in\Dsc$, and the second inequality
is again a consequence of \eqref{E:wedge}. Here, $B$ denotes a uniform positive constant.
In the following steps {\em $B$ will denote a positive constant that is independent of
$z$ and $\zt$, but whose specific value changes from line to line.}
\smallskip

Now note that as $A$ is defined on an open neighbourhood of $\Dsc$, $A\doot F\in\smoo^\alpha(\Dsc)$.
Combining this fact with the above estimate gives us
\begin{multline}
\frac{|A\doot F(z)-A\doot F(\zt)+G_\zt(z)|^p}{(K^\nu\lambda+1)\er\left[\Lam_\zt(z)/\lambda\right]}
\ \leq \ B\lambda^\nu\left\{\|A\doot F\|^{\nu/\alpha}_{\smoo^\alpha(\Dsc)}+
	\frac{M^{\nu/\alpha}|z-\zt|^{\nu/\alpha}}{|z-\zt|^\nu}\right\} \\
\forall z\in\Dsc\setminus\left(A\doot F+G_\zt-A\doot F(\zt)\right)^{-1}\{0\}, \;\; 
\forall\zt\in \Dsc\setminus E. \notag
\end{multline}
We have just shown that there exists a constant $B\gg 1$, independent of $\zt\in\Dsc\setminus E$,
such that
\begin{multline}
|A\doot F(z)-A\doot F(\zt)+G_\zt(z)|^p \ \leq \ 
B\er\left[H_\zt(z)(A\doot F(z)-A\doot F(\zt)+G_\zt(z))\right] \\
\forall z\in\Dsc, \;\; \forall\zt\in \Dsc\setminus E. \notag
\end{multline}
Then, Proposition~\ref{P:tool} gives us
\begin{multline}\label{E:hullImp}
\widehat{\gra{\Dsc}(F)} \ \subset \
\{(z,w)\in\Dsc\times\cplx:|A(z)w-A(\zt)F(\zt)+G_\zt(z)|\leq (B|z-\zt|)^{1/(p-1)}\} \\
\forall\zt\in \Dsc\setminus E.
\end{multline}

Let us first consider any $\zt\in\Dsc\setminus E$. Since $A(\zt)\neq 0$, \eqref{E:hullImp} already
tells us that
\begin{equation}\label{E:fibre1}
\widehat{\gra{\Dsc}(F)}\cap(\{\zt\}\times\cplx) \ = \ \{(\zt,F(\zt))\}\;\; \forall\zt\in \Dsc\setminus E.
\end{equation}
Next, consider a $\zt_*\in E$. Since $E$ is nowhere dense, there exists a sequence
$\{\zt_n\}_{n\in\nat}\subset\Dsc\setminus E$ such that $\zt_n\lrarw \zt_*$ as $n\to+\infty$. 
Let $(\zt_*,w_*)$ denote a point in $\widehat{\gra{\Dsc}(F)}\cap(\{\zt_*\}\times\cplx)$. Then,
applying \eqref{E:hullImp} with $\zt=\zt_n,\ n\in\nat$, we get
\begin{align}
& |A(\zt_*)||w_*-F(\zt_*)|-|A(\zt_*)-A(\zt_n)||F(\zt_*)|-|A(\zt_n)||F(\zt_*)-F(\zt_n)|
	-|G_{\zt_n}(\zt_*)| \notag \\
&\qquad
 \leq \ |A(\zt_*)w_*-A\doot F(\zt_n)+G_{\zt_n}(\zt_*)| \notag \\
&\qquad
 \leq \ (B|\zt_*-\zt_n|)^{1/(p-1)} \;\; \forall n\in\nat. \notag
\end{align}
If we now set 
\begin{align}
\mu &:= \inf\nolimits_{\Dsc}|A|,	&&N_1:= \sup\nolimits_{\Dsc}|F|, \notag \\
N_2 &:= \left\|\left. A\right|_{\Dsc}\right\|_{\smoo^\alpha(\Dsc)}, 
&&N_3:= \left\|F\right\|_{\smoo^\alpha(\Dsc)}, \notag
\end{align}
then, we infer that
\begin{multline}
|w_*-F(\zt_*)| \ \leq \ \frac{(B|\zt_*-\zt_n|)^{1/(p-1)}+(N_1N_2+N_2N_3)|\zt_*-\zt_n|^\alpha
				+M|\zt_*-\zt_n|}{\mu} \\
\lrarw 0 \; \text{as $n\to+\infty$}. \notag
\end{multline}
Since $\zt_*$ above was arbitrarily picked from $E$, this implies that 
\begin{equation}\label{E:fibre2}
\widehat{\gra{\Dsc}(F)}\cap(\{\zt_*\}\times\cplx) \ = \ 
\{(\zt_*,F(\zt_*))\}\;\; \forall\zt_*\in E.
\end{equation}
Since $\widehat{\gra{\Dsc}(F)}\subset \Dsc\times\cplx$, \eqref{E:fibre1} and
\eqref{E:fibre2} tell us that $\widehat{\gra{\Dsc}(F)}=\gra{\Dsc}(F)$. This establishes
the first part of Theorem~\ref{T:HolderApprx}.
\smallskip

For any compact set $L\Subset\cplx$, let $\mathscr{R}(L)$ denote the class of functions on
$L$ that are uniformly approximable on $L$ by rational functions whose poles lie outside $L$.
Assuming now that $F\in\smoo^1(\Dsc)$, set 
$L_F:= \{z\in\Dsc : \partial_{\zbar}F(z)=0\}$. In view of our assumptions on the topology of 
the set $L_F$, Result~\ref{R:mergelyan} implies that
\begin{equation}\label{E:ratApx}
\mathscr{R}(L_F)= \smoo(L_F).
\end{equation}
At this stage we can invoke a result of Wermer \cite[page~9]{wermer:pcd65}, which says:
\begin{itemize}
\item[{}] {\em Let $F\in\smoo^{1}(\Dsc)$ and assume that $\gra{\Dsc}(F)$ is polynomially convex. Then
$[z,F]_{\Dsc}$ consists exactly of those continuous functions on $\Dsc$ whose restrictions to
$L_F$ belong to $\mathscr{R}(L_F)$.}
\end{itemize}
We have already established that $\gra{\Dsc}(F)$ is polynomially convex. Thus, in view of
\eqref{E:ratApx}, we deduce that $[z,F]_{\Dsc}=\smoo(\Dsc)$.
\end{proof}
\medskip

\section{The proof of Theorem~\ref{T:WermerII}}\label{S:WermerII}

\noindent{We will emulate many of the notations and computations 
used in the proof of Theorem~\ref{T:HolderApprx}. For each $\zt\in\Dsc$, let
us define
\[
\Lam_\zt(z) \ := \ (z^m-\zt^m)(F(z)-F(\zt)), \; z\in\Dsc.
\]
Now observe that
\begin{equation}\label{E:location}
|\Lam_\zt(z) - |z^m-\zt^m|^2| \ 
\begin{cases}
\leq \ |z^m-\zt^m|^2 &\forall z\in\Dsc, \\
< \ |z^m-\zt^m|^2 &\forall z: z^m\neq\zt^m.
\end{cases}
\end{equation}
This tells us that $\Lam_\zt(z)$ lies in the disc $D(|z^m-\zt^m|^2;|z^m-\zt^m|^2)$ whenever 
$z^m\neq\zt^m$. Thus, if $r$ denotes the radius of $\Dsc$, then
\[
\Lam_\zt(z) \in \overline{D(4r^{2m};4r^{2m})} \;\; 
\forall (z,\zt)\in\Dsc\times\Dsc.
\]
It is elementary to infer from this that
\begin{equation}
\Lam_\zt(z) \in \{u+iv\in\cplx:u\geq 0, \ |v|\leq\sqrt{8}r^mu^{1/2}\} \;\;
\forall (z,\zt)\in\Dsc\times\Dsc.
\end{equation}
Note the resemblance of the above to condition to the condition 
\eqref{E:wedge} in Theorem~\ref{T:HolderApprx}. Hence, just as in the proof
of Theorem~\ref{T:HolderApprx}, if we pick a $\lambda>1$ such that $\lambda$
satisfies
\[
\sup_{\Dsc}|\Lam_\zt(z)| \ < \ \lambda \;\; \forall \zt\in\Dsc,
\]
then a string of estimates analogous to the one in the proof of Theorem~\ref{T:HolderApprx}
leads to
\[
\frac{|F(z)-F(\zt)|^4}{(8r^{2m}\lambda+1)\er[\Lam_\zt(z)/\lambda]} 
\ \leq \ \lambda^2\frac{|F(z)-F(\zt)|^2}{|z^m-\zt^m|^2}
\;\; \forall z\in\Dsc: z^m\neq\zt^m.
\]
Applying the condition \eqref{E:WermerII} to the above estimate, we get
\begin{equation}\label{E:compare}
|F(z)-F(\zt)|^4 \ \leq \ 
4\lambda(8r^{2m}\lambda+1)\er\left[(z^m-\zt^m)(F(z)-F(\zt))\right] \;\;
\forall (z,\zt)\in\Dsc\times\Dsc.
\end{equation}}
\smallskip

Let us denote the uniform constant on the right-hand side of \eqref{E:compare} by $B$.
Combining this inequality with Proposition~\ref{P:tool}, we get:
\begin{equation}\label{E:hullPinch}
\widehat{\gra{\Dsc}(F)} \ \subset \
\{(z,w)\in\Dsc\times\cplx:|w-F(\zt)|\leq (B|z^m-\zt^m|)^{1/3}\} \;\; \forall\zt\in\Dsc.
\end{equation}
Clearly, 
$\widehat{\gra{\Dsc}(F)}\cap(\{\zt\}\times\cplx)=\{(\zt,F(\zt))\} \;\; \forall\zt\in\Dsc$.
We have thus established that $\gra{\Dsc}(F)$ is polynomially convex. 
\smallskip

The second assertion of Theorem~\ref{T:WermerII} for the case $m=1$ is just the conclusion
of Result~\ref{R:Wermer}. Let us thus consider the case when $m\geq 2$. Let us fix a $z\neq 0$.
Then, we can find a sequence $\{r_n\}_{n\in\nat}\subset(\mathbb{R}\setminus\{0\})$ such that
\begin{itemize}
\item $\lim_{n\to\infty}r_n=0$; 
\item $(z+r_n)^m\neq z^m \ \forall n\in\nat$;
\item $(z+ir_n)^m\neq z^m \ \forall n\in\nat$.
\end{itemize}
By our assumptions on $R$:
\begin{equation}\label{E:barDeriv}
\partl{F}{\zbar}(z) \ = \ m\zbar^{m-1}+
\lim_{n\to\infty}\frac{1}{2}\left(\frac{R(z+r_n)-R(z)}{r_n}-\frac{R(z+ir_n)-R(z)}{ir_n}\right).
\end{equation}
Note that, by the properties of the sequence $\{r_n\}_{n\in\nat}$, we can find an
$N\in\zahl_+$ sufficiently large that:
\begin{align}
\frac{|R(z+r_n)-R(z)|}{|r_n|} \ < \ \alpha\frac{|(z+r_n)^m-z^m|}{|r_n|}
\ &\leq \ \alpha\sum_{j=0}^{m-1}|z+r_n|^j|z|^{m-1-j} \notag \\
&\leq \ m\frac{1+\alpha}{2}|z|^{m-1} \;\; \forall n\geq N. \label{E:estX}
\end{align}
A similar estimate holds for the second difference quotient in \eqref{E:barDeriv}. Combining
\eqref{E:estX} with \eqref{E:barDeriv}, we get
\[
\left|\partl{F}{\zbar}(z)\right| \ \geq \ m|z|^{m-1}-m\frac{1+\alpha}{2}|z|^{m-1} \ > \ 0,
\]
provided $z\neq 0$. Hence, in the terminology of the previous section,
\[
L_F \ := \{z\in\Dsc : \partial_{\zbar}F(z)=0\} \ = \ \{0\}.
\]
We conclude with the same argument as in the final paragraph of the proof of 
Theorem~\ref{T:HolderApprx}. Since we have already established that $\gra{\Dsc}(F)$ is 
polynomially convex, and $L_F$ is a singleton, \cite{wermer:pcd65} implies that
$[z,F]_{\Dsc}=\smoo(\Dsc)$. \hfill \qed
\medskip

\section{The relation of Theorem~\ref{T:HolderApprx} to known results}\label{S:discuss}

This section is dedicated to elaborating upon two observations made in Section~\ref{S:intro}.

\subsection{The invariance of the hypothesis of Theorem~\ref{T:HolderApprx} under the
action of certain elements of $\boldsymbol{{\rm Aut}(\CC)}$:} Let $\Dsc$ and $F$ be as given in 
Theorem~\ref{T:HolderApprx}. It is a tautology that polynomial convexity of $\gra{\Dsc}(F)$
(or the lack thereof) is preserved when transformed by any $\psi\in{\rm Aut}(\CC)$ that
maps $\Dsc\times\cplx$ onto itself. We claim that the truth (or falsity) 
of the hypothesis of Theorem~\ref{T:HolderApprx} too is preserved when $F$ is replaced by
that function of which $\psi(\gra{\Dsc}(F))$ is a graph. To establish this, 
it suffices to accomplish the following:
\begin{itemize}
\item given a $\psi\in{\rm Aut}(\CC)$ that maps $\Dsc\times\cplx$ onto itself,
show that there is a function (in the notation of (II), Section~\ref{S:intro}) 
$\widetilde{{}^\psi F}\in\smoo^\alpha(\Dsc)$ such that 
$\psi(\gra{\Dsc}(F))=\gra{\Dsc}(\widetilde{{}^\psi F})$.
\smallskip

\item assuming that $F$ satisfies the hypothesis of Theorem~\ref{T:HolderApprx}, produce
a nowhere dense $\widetilde{E}\subset\Dsc$; a constant $\widetilde{M}>0$;
a function $\widetilde{A}\in\hol(\Dsc)$ that vanishes nowhere on $\Dsc$; and, associated
to each $\zt\in\Dsc\setminus\widetilde{E}$, constants $\widetilde{C_\zt}\in S^1$ and 
functions $\widetilde{G_\zt}\in\hol_\zt(\Dsc)$ such that the statement obtained
by replacing $F$ by $\widetilde{{}^\psi F}$, and the other objects occurring in the 
hypothesis of Theorem~\ref{T:HolderApprx} by their analogues listed above, is also true.
\end{itemize}
  
Let us set
\[
\mathfrak{G}(\Dsc) \ := \ \{\psi\in{\rm Aut}(\CC):\psi(\Dsc\times\cplx)=\Dsc\times\cplx\}
\]
and determine all the elements of $\mathfrak{G}(\Dsc)$. For a $\psi\in\mathfrak{G}(\Dsc)$,
write $\psi=(\psi_1,\psi_2)$. For each $z\in\Dsc$, the functions $\psi_1(z,\bcdot)$ are
entire functions that map $\cplx$ to $\Dsc\Subset\cplx$. Hence, it follows from Liouville's
theorem that $\psi_1$ depends only on $z$. This implies that $\psi_1\in{\rm Aut}(\cplx)$ and,
as $\psi_1(\Dsc)=\Dsc$, $\exists\phi\in\mathbb{R}$ such that
\[
\psi_1(z,w) \ = \ \cis{\phi}z \;\; \forall (z,w)\in\CC.
\]
Furthermore, since $\psi$ must be injective, it follows
that $\psi_2(z,\bcdot)\in{\rm Aut}(\cplx)$ for each $z\in\CC$. It can now easily
be shown that there exist entire functions $\gee, \hft$, 
where $\gee$ is nowhere-vanishing, such that
\begin{equation}\label{E:auts}
\psi\in\mathfrak{G}(\Dsc) \ \Longrightarrow \ 
\psi(z,w)=(\cis{\phi}z,\gee(z)w+\hft(z)) \;\; \forall (z,w)\in\CC.
\end{equation}
It is, of course, obvious that each $\psi$ having the form given in \eqref{E:auts}
belongs to $\mathfrak{G}(\Dsc)$.
\smallskip

Now assume that $F$ satisfies the hypothesis on Theorem~\ref{T:HolderApprx}.
In view of \eqref{E:auts}, if $\psi\in\mathfrak{G}(\Dsc)$, then
\[
\widetilde{{}^\psi F}(z) \ = \gee(\ncis{\phi}z)F(\ncis{\phi}z)+\hft(\ncis{\phi}z)
\;\; \forall z\in\Dsc,
\]
for some $\phi\in\mathbb{R}$ and $\gee,\hft\in\hol(\cplx)$ such that 
$\gee$ is non-vanishing. Since $\gee,\hft\in\hol(\cplx)$, it is immediate that
$\widetilde{{}^\psi F}\in\smoo^\alpha(\Dsc)$. 
\smallskip

Let us introduce two notations. Let
\begin{align}
\widetilde{E} &:= \cis{\phi}E, \notag \\
\mathcal{W}(K,\nu) &:= \left\{u+iv\in\cplx:u\geq 0, \ |v|\leq Ku^{1/\nu}\right\}. \notag
\end{align}
Then, for each $\zt\in\Dsc\setminus\widetilde{E}$, write
\[
\Lam_{\zt}(z) \ := \ 
C_{\ncis{\phi}\zt}\ncis{\phi}(z-\zt)(A\doot F(\ncis{\phi}z)-A\doot F(\ncis{\phi}\zt)+
G_{\ncis{\phi}\zt}(\ncis{\phi}z)),
\]
where $C_{\ncis{\phi}\zt}\in S^1$ and $G_{\ncis{\phi}\zt}\in\hol(\Dsc)$ are {\em exactly}
as provided by the hypothesis of Theorem~\ref{T:HolderApprx}; this makes sense because 
$\zt\in\Dsc\setminus\widetilde{E}$ in the preceding definition. It is obvious, given our 
assumption that $F$ satisfies the hypothesis of Theorem~\ref{T:HolderApprx}, that
\begin{equation}\label{E:lamWedge}
\Lam_{\zt}(z) \in \mathcal{W}(K,\nu) \;\; \forall z\in\Dsc \ 
\text{and $\forall\zt\in\Dsc\setminus\widetilde{E}$.}
\end{equation}
It is now follows from a simple computation that if we define
\begin{align}
\widetilde{A} &:= \frac{A}{\mathcal{G}}(\ncis{\phi}\bcdot), 
&&\widetilde{G_\zt} :=  G_{\ncis{\phi}\zt}(\ncis{\phi}\bcdot)
				-\frac{A\doot\mathcal{H}}{\mathcal{G}}(\ncis{\phi}\bcdot)
				+\frac{A\doot\mathcal{H}}{\mathcal{G}}(\ncis{\phi}\zt), \notag \\
\widetilde{C_\zt} &:= \ncis{\phi}C_{\ncis{\phi}\zt} 
&&\forall\zt\in\Dsc\setminus\widetilde{E} \notag,
\end{align}
then 
\begin{equation}\label{E:lamTrans}
\Lam_{\zt}(z) \ = \ 
\widetilde{C_\zt}(z-\zt)
	(\widetilde{A}\doot{\widetilde{{}^\psi F}}(z)-\widetilde{A}\doot{\widetilde{{}^\psi F}}(\zt)
	+ \widetilde{G_\zt}(z)) \;\; \forall z\in\Dsc \
	\text{and $\forall\zt\in\Dsc\setminus\widetilde{E}$}.
\end{equation}
From \eqref{E:lamWedge} and \eqref{E:lamTrans}, it follows that if $F$ satisfies the hypothesis
of Theorem~\ref{T:HolderApprx}, then the following statement: 
\begin{itemize}
\item[{}] {\em We can find a nowhere-dense subset 
$\widetilde{E}\subset\Dsc$, a nowhere vanishing function 
$\widetilde{A}\in\hol(\Dsc)$, positive constants $\widetilde{M},K>0$, and a number $\nu\geq 1$
such that for each $\zt\in\Dsc\setminus\widetilde{E}$, there exist:
\begin{itemize}
\item[\textbullet]
 a function $\widetilde{G_\zt}\in\hol_\zt(\Dsc)$ satisfying 
$|z-\zt|^{-1}|\widetilde{G_\zt}(z)|\leq \widetilde{M} \ \forall z\in\Dsc$, and
\item[\textbullet] 
a constant $\widetilde{C_\zt}\in S^1$,
\end{itemize} 
so that
\[
\widetilde{C_\zt}(z-\zt)
(\widetilde{A}\doot{\widetilde{{}^\psi F}}(z)-\widetilde{A}\doot{\widetilde{{}^\psi F}}(\zt)
+ \widetilde{G_\zt}(z))
\in
\mathcal{W}(K,\nu) \;\; \forall z\in\Dsc.
\]}
\end{itemize}
also holds true (with $K$ and $\nu$ having exactly the same values as hypothesized in 
Theorem~\ref{T:HolderApprx}). As $\psi$ is invertible, the last implication is sufficient
to establish that the hypothesis of Theorem~\ref{T:HolderApprx} is functorial with
respect to $\mathfrak{G}(\Dsc)$.
\medskip

\subsection{The relation between Theorem~\ref{T:HolderApprx} and Wermer's theorem 
(Result~\ref{R:Wermer}):} We remark that when we consider the weaker form of Wermer's
theorem --- i.e. taking its conclusion as {\em merely} that $\gra{\Dsc}(F)$ is
polynomially convex --- then this weaker result is subsumed by Theorem~\ref{T:HolderApprx}.
To see this, we first observe that, without loss of generality, we may take 
$\Dsc$ in Wermer's theorem to be a disc centred at $0\in\cplx$. Next, we draw the 
reader's attention to the argument in the first paragraph of
the proof of Theorem~\ref{T:WermerII}. By this argument, and from the assumptions on
$F$ in Wermer's theorem, it follows that 
\[
(z-\zt)(F(z)-F(\zt))\in 
\left\{u+iv\in\cplx:u\geq 0, \ |v|\leq \sqrt{8}ru^{1/2}\right\}
\]
where $r>0$ is the radius of the disc $\Dsc$. The above shows that the functions
considered by Result~\ref{R:Wermer} are a special case of the functions considered
by Theorem~\ref{T:HolderApprx}.
\medskip

\noindent{{\bf Acknowledgement.} I thank the anonymous referee of an earlier version of
this work for helpful comments on exposition and for drawing my attention to work of
which I had been unaware.}
\bigskip

\end{document}